\numberwithin{equation}{section}
\def\mE{\mathcal{E}}
\newtheorem{thm}{Theorem}[section]
\newtheorem{lemma}[thm]{Lemma}
\newtheorem{prop}[thm]{Proposition}
\theoremstyle{definition}
\newtheorem{rem}[thm]{Remark}
\theoremstyle{definition}
\theoremstyle{definition}
\newcommand{\be}{\begin{eqnarray}}
\newcommand{\ee}{\end{eqnarray}}
\newcommand{\comment}[1]{}
\begin{document}

\title[Llarull's theorem on odd dimensional manifolds: the noncompact case]{Llarull's theorem on odd dimensional manifolds: \\ the noncompact case}
 
\author{Yihan Li, Guangxiang Su, Xiangsheng Wang and Weiping Zhang}

\address{School of Mathematical Sciences \& LPMC, Nankai
University, Tianjin 300071, P.R. China}
\email{yhli@nankai.edu.cn}

\address{Chern Institute of Mathematics \& LPMC, Nankai
University, Tianjin 300071, P.R. China}
\email{guangxiangsu@nankai.edu.cn}

\address{School of Mathematics, Shandong University, Jinan, Shandong 250100, P.R. China}
\email{xiangsheng@sdu.edu.cn}

\address{Chern Institute of Mathematics \& LPMC, Nankai
University, Tianjin 300071, P.R. China}
\email{weiping@nankai.edu.cn}

\begin{abstract}
Let $(M,g^{TM})$ be an odd dimensional ($\dim M\geq 3$) connected oriented noncompact complete spin Riemannian manifold. Let $k^{TM}$ be the associated scalar curvature. Let $f:M\to S^{\dim M}(1)$ be a smooth area decreasing map which is locally constant near infinity and of nonzero degree. Suppose $k^{TM}\geq ({\dim M})({\dim M}-1)$ on the support of ${\rm d}f$, we show that $\inf(k^{TM})<0$. This answers a question of Gromov. 
\end{abstract}

\maketitle



\section{Introduction} \label{s0}

It is well-known that, starting with the Lichnerowicz vanishing theorem \cite{L63}, Dirac operators have played important roles in the study of Riemannian metrics of positive scalar curvature on spin manifolds  (cf. \cite{GL83}, \cite{LaMi89}). A notable example is Llarull's rigidity theorem \cite{Ll} which states that for a compact spin Riemannian manifold $(M,g^{TM})$ of dimension $n$ such that the associated scalar curvature 
$k^{TM}$ verifies that $k^{TM}\geq n(n-1)$, then any (non-strictly) area decreasing smooth map $f:M\to S^n(1)$ of nonzero degree is an isometry, where $S^n(1)$ is the standard unit $n$-sphere.

In answering a question of Gromov in an earlier version of \cite{G23}, Zhang \cite{Z20} proved that for an even dimensional noncompact complete spin Riemannian manifold $(M,g^{TM})$ and a smooth (non-strictly) area decreasing map $f:M\to S^{\dim M}(1)$ which is locally constant near infinity and of nonzero degree, if the associated scalar curvature $k^{TM}$ verifies
\begin{equation}\label{m1.1}
  k^{TM}\geq (\dim M) (\dim M-1)\text{ on }{\rm Supp}({\rm d}f),
\end{equation}
then $\inf (k^{TM})<0$. When $\dim M$ is odd, Zhang \cite{Z20} proved that $\inf (k^{TM})<0$ still holds if the inequality in (\ref{m1.1}) is strict, by using the standard trick of passing $M$ to $M\times S^1$.  The purpose of this paper is to improve Zhang's result in the odd dimensional case so that one gets a complete  answer to  Gromov's question.

Recall that the main idea in \cite{Z20}, which goes back to \cite[(1.11)]{Z19}, 
is to deform the involved twisted Dirac operator on $M$ by a suitable endomorphism of a twisted  ${\mathbb Z}_2$-graded vector bundle. {Since the deformed Dirac operator is invertible near infinity, one can apply} a relative index theorem (cf. \cite[Theorem 2.1]{Z23}) to {obtain a contradiction}.

In \cite{LSW}, using the spectral flow method as suggested in \cite{G23}, Li, Su and Wang gave a direct proof of Llarull's rigidity theorem for odd dimensional manifolds. 

In this paper, we combine the methods in \cite{Z20} and \cite{LSW} to deal with the odd dimensional noncompact case. In doing so, we also use the ideas in \cite{GL83} and \cite{SWZ} to construct a closed manifold by a gluing method, to overcome the obvious difficulty caused by the noncompactness of the underlying manifold.

The main result of this paper can be stated as follows.
\begin{thm}\label{th1.1}
Let $(M,g^{TM})$ be an odd dimensional ($\dim M\geq 3$) connected oriented noncompact complete spin Riemannian manifold. Let $k^{TM}$ be the associated scalar curvature. Let $f:M\to S^{\dim M}(1)$ be a smooth area decreasing map which is locally constant near infinity and of nonzero degree. Suppose 
\begin{align}
k^{TM}\geq ({\dim M})(\dim M-1)\ \ {\rm on}\ \ {\rm Supp}({\rm d}f),
\end{align}
then $\inf(k^{TM})<0$.
\end{thm}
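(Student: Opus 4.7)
Proceed by contradiction: assume $\inf_M k^{TM}\geq 0$, and set $n=\dim M$. Since $f$ is locally constant outside a compact set, fix a compact codimension-zero submanifold $M_0\subset M$ with smooth boundary such that $f$ is locally constant on each component of $M\setminus M_0$ and $\mathrm{Supp}({\rm d}f)\subset \mathrm{int}(M_0)$. Following the Gromov--Lawson gluing method as adapted in \cite{SWZ}, I would attach to each boundary component of $M_0$ a compact spin cap equipped with a Riemannian metric of strictly positive scalar curvature matching $g^{TM}|_{\partial M_0}$ smoothly, producing a closed odd-dimensional spin Riemannian manifold $(\widehat M,g^{T\widehat M})$. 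Because $f$ is constant on each end of $M$, it extends to a smooth map $\widehat f:\widehat M\to S^n(1)$ that is locally constant on $\widehat M\setminus M_0$, area decreasing everywhere, and of degree $\deg(\widehat f)=\deg(f)\neq 0$. One then has $k^{T\widehat M}\geq n(n-1)$ on $\mathrm{Supp}({\rm d}\widehat f)\subset M_0$, $k^{T\widehat M}\geq 0$ on the rest of $M_0$, and $k^{T\widehat M}>0$ on $\widehat M\setminus M_0$.

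\textbf{Deformed family and spectral flow.} On $\widehat M$, consider $\widehat E=\widehat f^*S(TS^n)$ with its pulled-back $\mathbb{Z}_2$-grading and connection, and form the twisted Dirac operator $D^{\widehat M}_{\widehat E}$ on $S(T\widehat M)\otimes \widehat E$. Following \cite{Z20}, introduce an odd self-adjoint endomorphism $V$ supported in $\widehat M\setminus M_0$, built from the canonical trivialization of $\widehat E$ there and nondegenerate away from $M_0$. Following the spectral-flow framework of \cite{LSW}, I would then study the one-parameter family of self-adjoint operators
\[
D_T \;=\; D^{\widehat M}_{\widehat E} + T\,V, \qquad T\in[0,T_0],
\]
together with its spectral flow $\mathrm{sf}\{D_T\}_{T\in[0,T_0]}$.

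\textbf{Two computations and contradiction.} On the topological side, a homotopy argument in the spirit of \cite{LSW}, combined with the observation that on $M_0$ the family $D_T$ coincides with the LSW deformation while $V$ is merely a trivial-coefficient perturbation on the caps, would identify $\mathrm{sf}\{D_T\}$ with a nonzero multiple of $\deg(\widehat f)=\deg(f)\neq 0$. On the analytic side, the Lichnerowicz-type formula
\[
D_T^{\,2} \;=\; \nabla^{*}\nabla + \tfrac{1}{4}k^{T\widehat M} + R^{\widehat E} + T\,[D^{\widehat M}_{\widehat E},V] + T^{2}V^{2},
\]
together with Llarull's pointwise bound $R^{\widehat E}\geq -\tfrac{1}{4}n(n-1)$ on $\mathrm{Supp}({\rm d}\widehat f)$ (from the area-decreasing hypothesis), the curvature inequality $k^{T\widehat M}\geq n(n-1)$ there, and the nonnegativity of $k^{T\widehat M}$ elsewhere (using the contradictory assumption on $M_0$ and positivity on the caps), yields $\tfrac{1}{4}k^{T\widehat M}+R^{\widehat E}\geq 0$ globally, with strict positivity off $\mathrm{Supp}({\rm d}\widehat f)$. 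Since $V^{2}\geq c_{0}>0$ on a neighborhood of $\widehat M\setminus M_0$ (which contains $\widehat M\setminus \mathrm{int}(M_0)$), arguing as in \cite{Z20} for $T_0$ sufficiently large gives $D_T^{\,2}\geq c\,\mathrm{Id}$ uniformly along the family. Hence $\mathrm{sf}\{D_T\}=0$, contradicting the topological computation.

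\textbf{Main obstacle.} The crux is the gluing: producing a closed spin manifold $\widehat M$ with (a) smooth continuation of $(M_0,g^{TM}|_{M_0})$, (b) positive scalar curvature on the added caps, and (c) a locally constant extension of $f$ so that $R^{\widehat E}\equiv 0$ on the caps and $\deg(\widehat f)=\deg(f)$. Ensuring all three conditions simultaneously--while producing a compact model in which the spectral-flow content still detects $\deg(f)$--is where the hypotheses on $f$ (locally constant near infinity, nonzero degree) and the techniques of \cite{GL83,SWZ} are essential. Once $\widehat M$ is constructed, the spectral-flow argument becomes a direct synthesis of \cite{Z20} and \cite{LSW}.
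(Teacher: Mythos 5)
Your overall shape (argue by contradiction, compactify, deform by a zeroth-order term $V$ that is nondegenerate near infinity, and play a topological computation against an analytic positivity estimate) matches the paper, but both of your key steps have genuine gaps.

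First, the gluing. You propose to cap each boundary component of $M_0$ with a compact spin manifold carrying \emph{positive} scalar curvature whose metric smoothly extends $g^{TM}|_{\partial M_0}$. There is no reason such caps exist: extending a prescribed boundary metric to a PSC metric on a compact filling is an obstructed problem, and you offer no construction (you yourself flag it as the ``main obstacle''). The point you are missing is that no curvature positivity on the added piece is needed. The paper instead glues $M_{H_{3m}}$ to a \emph{mirror copy} of itself (opposite orientation) through a cylinder $H_{3m}\times[-1,2]$; the metric on the added piece is just the original one, and the required positivity of $(D+\varepsilon\varphi V/\beta)^2$ there comes entirely from the term $\varepsilon^2V^2/\beta^2\geq \varepsilon^2\delta/\beta^2$ after shrinking $\beta$, not from scalar curvature. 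To make $V$ nondegenerate on the whole added region one extends $f$ by running each constant value $p_j$ along a path $\xi_j$ to a common point $p_0$ over the cylinder (so the mirror copy maps to the single point $p_0$) and chooses the vector field $v$ on $S^{2k-1}(1)$ nonvanishing along all the $\xi_j$. The remaining region where neither mechanism is strictly positive (the set where $k^{TM}$ is merely $\geq 0$) is handled by the Gromov--Lawson cutoffs $\psi_{m,1},\psi_{m,2}$ with $m$ large; your proposal has no substitute for this step.

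Second, the family. You take $D_T=D+TV$, $T\in[0,T_0]$, and claim its spectral flow is a nonzero multiple of $\deg(f)$ while simultaneously claiming $D_T^2\geq c>0$ uniformly in $T$; these two claims are mutually exclusive (uniform invertibility forces $\mathrm{sf}=0$), and the first is unsupported: varying only the strength of $V$ with a fixed connection carries no degree information. Moreover $D_0=D$ itself cannot be shown invertible under the hypotheses -- that is exactly the borderline Llarull situation -- so the uniform bound fails at $T=0$. The correct odd-dimensional mechanism (from \cite{LSW}, as implemented in the paper) is to vary the \emph{connection}: one uses the trivialized bundle $E=S^{2k-1}(1)\times S_{2k}$ with $\nabla^{E,u}=\mathrm{d}+u\omega$, $\omega=g^{-1}[\mathrm{d},g]$, and the family $D^{\mE,u}_{\beta,\varepsilon}$, $u\in[0,1]$, whose endpoints are conjugate by $f_l^*(g)$ (this uses that $V$ commutes with $f_l^*(g)$, Proposition \ref{p2.2}). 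Invertibility of the whole family makes $\eta(D^{\mE,u}_{\beta,\varepsilon})$ smooth in $u$; the local variation formula integrates to a nonzero constant times $\deg(f)\int_{S^{2k-1}(1)}\mathrm{tr_s}(\omega^{2k-1})$, while conjugacy of the endpoints forces the total variation to vanish, giving $\deg(f)=0$ and the contradiction. Your choice of twisting bundle $\widehat f^*S(TS^n)$ with its fixed connection does not produce this gauge-theoretic degree term in odd dimensions.
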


The rest of this paper is organized as follows. In Section 2, we recall the definitions of the Clifford algebras and the supertrace. In Section 3, we construct a closed manifold by a gluing method and define a family of deformed twisted Dirac operators. In Section 4, we prove the positivity of this family of deformed twisted Dirac operators. In Section 5, we prove Theorem \ref{th1.1}  by using the $\eta$-invariant   of Atiyah-Patodi-Singer \cite{APS}.

\section{Clifford algebras and the supertrace}

In this section we briefly recall some properties of Clifford algebras (cf. \cite{BC}, \cite{BF}, \cite{LaMi89}).

Let $\mathbb{R}^{2k}$ be the $2k$ dimensional real oriented Euclidean space with the standard metric. Let $\{\partial_1,\cdots,\partial_{2k}\}$ be an oriented orthonormal basis of $\mathbb{R}^{2k}$. Let $Cl(\mathbb{R}^{2k})$ be the Clifford algebra of $\mathbb{R}^{2k}$ generated with the relations
\begin{align}\label{c2.1}
\partial_i\partial_j+\partial_j\partial_i=-2\delta_{ij},\ \ 1\leq i,j\leq 2k.
\end{align}
We have,
$${\mathbb{C}l}(\mathbb{R}^{2k}):=Cl(\mathbb{R}^{2k})\otimes _{\mathbb{R}}\mathbb{C}\cong {\rm End}(S_{2k}),$$
where $S_{2k}=S_{2k,+}\oplus S_{2k,-}$ is the $\mathbb{Z}_2$-graded spinor space.
Let $\Gamma=(\sqrt{-1})^{k}\partial_1\cdots\partial_{2k}$ be the chirality operator such that
$\Gamma|_{S_{2k,\pm}}=\pm {\rm Id}|_{S_{2k,\pm}}$.  We equip $S_{2k}$ with the canonical Hermitian metric.  

Similarly, let $\mathbb{R}^{2k-1}\subset \mathbb{R}^{2k}$ be generated by $\{\partial_1,\cdots,\partial_{2k-1}\}$ with the standard induced metric.  We also have the Clifford algebras $Cl(\mathbb{R}^{2k-1})$, $\mathbb{C}l(\mathbb{R}^{2k-1})$ and the spinor space $S_{2k-1}$ which is identified with $S_{2k,+}$ (cf. \cite{BF}). As in \cite{BC} and \cite{BF},
for any $v\in  \mathbb{R}^{2k-1}$, we denote by $ {c}(v)=v\partial _{2k}$ the action of $v$ on $S_{2k-1}=S_{2k,+}$.

Let $W=W_{+}\oplus W_{-}$ be a $\mathbb{Z}_2$-graded vector space and $\rho$ be the $\mathbb{Z}_2$-grading operator such that $\rho|_{W_{\pm}}={\pm {\rm Id}}|_{W_\pm}$. Then ${\rm End}(W)$ also has a $\mathbb{Z}_2$-grading ${\rm End}(W)={\rm End}_{+}(W)\oplus {\rm End}_{-}(W)$ defined by
$${\rm End}_{+}(W)={\rm End}(W_{+})\oplus {\rm End}(W_{-}),$$
$${\rm End}_{-}( W)={\rm Hom}(W_-,W_{+})\oplus {\rm Hom}(W_+,W_-).$$


Following \cite{Q},  for any $A\in{\rm End}(W)$, in terms of the $\mathbb{Z}_2$-grading, it can be written in the form
\begin{align}
A=\left(\begin{matrix}
A_{00}&A_{01}\\A_{10}&A_{11}
\end{matrix}\right).
\end{align}
Then the supertrace of $A$ is defined by 
\begin{align}
{\rm tr_s}(A)={\rm tr}(\rho A)={\rm tr}(A_{00})-{\rm tr}(A_{11}).
\end{align}

Following \cite[\S 1]{BC}, for any  $v\in \mathbb{R}^{2k-1}$, 
 we extend the action of $c(v)$ to  ${\rm End}(S_{2k-1}{\otimes} W) $ as  $c(v)\otimes \rho$. We still denote this extended action by $c(v)$.
Also, for any $A\in {\rm End}(W)$, we extend it to act on $S_{2k-1}\otimes W$ as ${\rm Id}_{S_{2k-1}}\otimes A$, and still denote the extended action by $A$. Then for any $A\in {\rm End}_-(W)$, one has 
 \begin{align}\label{2.00}
 c(v)A+Ac(v)=0. 
 \end{align}

For any multi-index $I=\{i_1,\cdots, i_q\}\subset \{1,2,\cdots,2k-1\}$,  denote $\partial_I=\partial_{i_1}\cdots \partial_{i_q}$, $ {c}(\partial_I)= {c}(\partial_{i_1})\cdots  {c}(\partial_{i_q})$ and $|I|=q$.

\begin{lemma}\label{le2.1}
Let $T\in {\rm End}(S_{2k-1}{\otimes}W)$ be an element of the form
\begin{align}\label{5.0}
T=\sum_{|I|\ {\rm odd}}c\left(\partial_I\right)  A_{I}+\sum_{|J|\ {\rm even}} c\left(\partial_J\right) B_{J},
\end{align}
where $A_I\in {\rm End}_{+}(W), B_J\in {\rm End}_{-}(W)$. Then we have
\begin{align}\label{5.0a}
{\rm tr}(T) 
 =(\sqrt{-1})^{-k}2^{k-1}{\rm tr_s}\left(\left. A_{I_{2k-1}}\right|_W\right),
\end{align}
where $I_{2k-1}=\{1,2,\cdots,2k-1\}$.
\end{lemma}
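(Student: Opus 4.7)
The plan is to reduce ${\rm tr}(T)$ to a sum of products of a Clifford trace on $S_{2k,+}$ and an endomorphism trace on $W$, by exploiting the tensor product decomposition. Under the extension conventions just reviewed, the action of $c(\partial_i)$ on $S_{2k-1}\otimes W=S_{2k,+}\otimes W$ factorizes as $(\partial_i\partial_{2k})\otimes\rho$, so for any multi-index $I=\{i_1<\cdots<i_q\}$ we have
\[
c(\partial_I)=P_I\otimes\rho^{|I|},\qquad P_I:=(\partial_{i_1}\partial_{2k})(\partial_{i_2}\partial_{2k})\cdots(\partial_{i_q}\partial_{2k})\in\End(S_{2k,+}),
\]
while $A_I$ and $B_J$ act as $\Id_{S_{2k-1}}\otimes A_I$ and $\Id_{S_{2k-1}}\otimes B_J$. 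Thus each summand of $T$ yields a factorized trace ${\rm tr}_{S_{2k,+}}(P_I)\cdot{\rm tr}_W(\rho^{|I|}A_I)$, and similarly for $B_J$.

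The $|J|$-even summands vanish trivially: $\rho^{|J|}=\Id_W$, while $B_J\in\End_-(W)$ is off-diagonal with respect to the $\mathbb{Z}_2$-grading, so ${\rm tr}_W(B_J)=0$. For the $|I|$-odd summands, $\rho^{|I|}=\rho$, and since $A_I\in\End_+(W)$ commutes with $\rho$, ${\rm tr}_W(\rho A_I)={\rm tr_s}(A_I|_W)$. A short induction on $q$, using $(\partial_i\partial_{2k})(\partial_j\partial_{2k})=\partial_i\partial_j$ for $i\ne j$ (a consequence of $\partial_{2k}\partial_j=-\partial_j\partial_{2k}$ and $\partial_{2k}^2=-1$), produces the closed form
\[
P_I=\partial_{i_1}\partial_{i_2}\cdots\partial_{i_q}\partial_{2k}\qquad(q\text{ odd}),
\]
with no extra sign, because the interior $\partial_{2k}$'s pair up and cancel cleanly at each even step.

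It remains to evaluate ${\rm tr}_{S_{2k,+}}(P_I)$. Set $K=\{i_1,\ldots,i_q,2k\}\subseteq\{1,\ldots,2k\}$, so $|K|=q+1$ is even and $P_I$ preserves the $\mathbb{Z}_2$-grading on $S_{2k}$; hence
\[
{\rm tr}_{S_{2k,+}}(P_I)=\tfrac12\bigl({\rm tr}_{S_{2k}}(P_I)+{\rm tr}_{S_{2k}}(\Gamma P_I)\bigr).
\]
When $I$ is a proper subset of $\{1,\ldots,2k-1\}$, both $K$ and its complement in $\{1,\ldots,2k\}$ are nonempty with even cardinality; for any nonempty even-degree Clifford monomial $\partial_K$, picking $\ell\in K$ gives $\partial_\ell\partial_K=-\partial_K\partial_\ell$, so ${\rm tr}_{S_{2k}}(\partial_K)=0$ by cyclicity. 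Applying this to $P_I$ and (after using $\Gamma=(\sqrt{-1})^k\partial_1\cdots\partial_{2k}$ to simplify) to $\Gamma P_I$ shows ${\rm tr}_{S_{2k,+}}(P_I)=0$. Only $I=I_{2k-1}$ survives: there $P_{I_{2k-1}}=\partial_1\cdots\partial_{2k}=(\sqrt{-1})^{-k}\Gamma$ acts as $(\sqrt{-1})^{-k}\Id$ on $S_{2k,+}$, yielding ${\rm tr}_{S_{2k,+}}(P_{I_{2k-1}})=(\sqrt{-1})^{-k}2^{k-1}$. Combining with the $W$-trace gives \eqref{5.0a}. The only mildly delicate step is the sign-bookkeeping in the inductive simplification of $P_I$; the rest is standard Clifford algebra.
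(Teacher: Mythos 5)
Your argument is correct and follows essentially the same route as the paper: factor the trace over the tensor product $S_{2k-1}\otimes W$, kill the $B_J$ terms because they are off-diagonal on $W$, and observe that the only surviving Clifford trace is the top one $I=I_{2k-1}$. The only difference is that where the paper simply cites \cite{BC} and \cite{BF} for the vanishing of ${\rm tr}\left(c(\partial_I)|_{S_{2k-1}}\right)$ when $I\neq I_{2k-1}$ and for the value $(\sqrt{-1})^{-k}2^{k-1}$ at the top index, you verify these facts directly from $c(\partial_i)=\partial_i\partial_{2k}$ by a conjugation/cyclicity argument on $S_{2k}$ --- a sound, self-contained substitute.
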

\begin{proof}
From (\ref{5.0}), we have
\begin{align}\label{5.1}
{\rm tr}(T)=\sum_{|I|\ {\rm odd}}{\rm tr}\left( {c}(\partial_I)|_{S_{2k-1}}\right){\rm tr}\left( \left. \left(\rho A_{I}\right)\right|_W\right)+\sum_{|J|\ {\rm even}}{\rm tr}\left( {c}(\partial_J)|_{S_{2k-1}}\right){\rm tr}\left(\left.B_{J}\right|_W\right).
\end{align}

Since $B_J\in {\rm End}_{-}(W)$, we have ${\rm tr}(B_J|_W)=0$. By \cite[Lemma 1.22]{BC} and \cite[(1.7)]{BF}, for $|I|\geq 1$, ${\rm tr}\left( {c}(\partial_I)|_{S_{2k-1}}\right)$ is nonzero iff $I=I_{2k-1}=\{1,2,\cdots,2k-1\}$ and 
\begin{align}\label{5.2}
{\rm tr}\left( {c}(\partial_I)|_{S_{2k-1}}\right)=(\sqrt{-1})^{-k}2^{k-1}.
\end{align}

From (\ref{5.1}) and (\ref{5.2}), we get (\ref{5.0a}). 
\end{proof}

On the other hand, for the role of $W$, 
we consider  the standard unit sphere $S^{2k-1}(1)$ as a subset of $\mathbb{R}^{2k}$ and for any $x\in S^{2k-1}(1)$ we write $x=(x^1,x^2,\cdots,x^{2k})$ with respect to the basis $\{\partial_1,\cdots,\partial_{2k}\}$. 

Let $E=E_+\oplus E_-=(S^{2k-1}(1)\times S_{2k,+})\oplus (S^{2k-1}(1)\times S_{2k,-})$ be the $\mathbb{Z}_2$-graded trivial Hermitian vector bundle of spinors on $S^{2k-1}(1)$. Let ${\rm d}={\rm d}_{+}\oplus {\rm d}_{-}$ be the trivial Hermitian connection on $E$. The chirality operator $\Gamma$ induces an action $\rho$ on $E$ such that $\rho|_{E_{\pm}}=\pm {\rm Id}|_{E\pm}$.

For any $u\in \mathbb{R}^{2k}$, denote the Clifford action of $u$ on $S_{2k}$ by ${\widetilde c}(u)$. 
Let $g:S^{2k-1}(1)\to {\rm End}_{+}(S_{2k})$ be defined by
\begin{align}
g(x)=\widetilde{c}(\partial_{2k})\widetilde{c}(x).
\end{align} 
Clearly, $g$ is unitary.  

We define a family of connections $\{\nabla^{E,u},{0\leq u\leq 1}\}$ on $E$ by 
\begin{align}
\nabla^{E,u}={\rm d}+ug^{-1}[{\rm d},g]=:{\rm d}+u\omega,
\end{align}
where $[\cdot,\cdot]$ is the supercommutator in the sense of \cite{Q} and $\omega=g^{-1}[{\rm d},g]$ is an ${\rm End}_{+}(E)$-valued 1-form.  Under the decomposition $E=E_+\oplus E_-$, the connection $\nabla^{E,u}$ has the form
\begin{align}
\nabla^{E,u}=\left(\begin{matrix}\nabla^{E_+,u}&0\\0&\nabla^{E_-,u}\end{matrix}\right)=\left(\begin{matrix}{\rm d}_++u\omega&0\\0&{\rm d}_{-}+u\omega\end{matrix}\right).
\end{align}
By \cite[Lemma 2.2]{LSW}, for any $u\in [0,1]$, $\nabla^{E,u}$ is a unitary connection on $E$.

By \cite[Proposition 1.4]{Ge}, one gets
\begin{align}\label{d4.11}
{\rm tr_s}\left(\omega^{2k-1}\right)=(-2\sqrt{-1})^k (2k-1)!\, {\rm vol}_{S^{2k-1}(1)},
\end{align}
where ``${\rm vol}_{S^{2k-1}(1)}$" is the Riemannian volume form on $S^{2k-1}(1)$. Note that the sign here is different from what in \cite[Proposition 1.4]{Ge}, since we use the relations (\ref{c2.1})
which differ from what is used in \cite{Ge} by a negative sign.

\section{The glued manifold and the deformed twisted Dirac operators}

This section consists of two subsections. In the first subsection,    we reduce the ``locally constant near infinity" situation to the case of ``constant near infinity", and then construct a closed glued manifold by  using ideas in \cite{GL83} and \cite{SWZ}. In the second subsection we construct a family of deformed Dirac operators on the glued manifold.

Let $(M,g^{TM})$ be a $2k-1$ dimensional connected oriented noncompact complete spin Riemannian manifold, $k\geq 2$. Let $f:M\to S^{2k-1}(1)$ be a smooth area decreasing map which is locally constant near infinity and of nonzero degree. Here the area decreasing means that for any two form $\alpha\in\Omega^{2}(S^{2k-1}(1))$, $f^*\alpha\in\Omega^2(M)$ verifies that
\begin{align}\label{6.0}
|f^*\alpha|\leq |\alpha|.
\end{align}

\subsection{Construct a closed manifold by gluing method}

Following \cite[Theorem 1.17]{GL83}, we choose a fixed point $x_0\in M$ and let $d:M\to \mathbb{R}^+$ be a regularization of the distance function ${\rm dist}(x,x_0)$ such that 
{
  \begin{equation}
    \label{eq:fun-d}
  |\nabla d|(x)\leq \frac{3}{2},
\end{equation}
for any $x\in M$.}
Set
\begin{equation}\label{728}
  B_m=\{x\in M: d(x)\leq m\},\ m\in \mathbb{N}.
\end{equation}

Let $K\subset M$ be a compact subset such that $f$ is locally constant outside $K$. Since $K$ is compact, we can choose a sufficiently large $m$ such that $K\subseteq B_m$. This implies
\begin{equation}
  \label{eq:df-b}
  {\rm Supp}({\rm d} f) \subseteq K \subseteq B_m.
\end{equation}

Following \cite{GL83}, we take a compact hypersurface $H_{3m}\subseteq M\setminus K$, cutting $M$ into two parts such that the compact part, denoted by $M_{H_{3m}}$, contains $B_{3m}$. Then $M_{H_{3m}}$ is a compact smooth manifold with boundary $H_{3m}$. Note that the number of connected components of $M\setminus M_{H_{3m}}$ is finite. Let $\{Y_j\}_{j=1}^{l}$ be the connected components of $M\setminus M_{H_{3m}}$. 

Let $H_{3m}\times [-1,2]$ be the product manifold and we construct a metric $H_{3m}\times [-1,2]$ as follows. Near the boundary $H_{3m}\times \{-1\}$ of $H_{3m} \times [-1, 2]$, i.e.,\ $H_{3m}\times [-1, -1+ \varepsilon')$, by using the geodesic normal coordinate to $H_{3m} \subseteq M_{H_{3m}}$, we can identify $H_{3m} \times [-1, -1+ \varepsilon')$ with a neighborhood of $H_{3m}$, {denoted by} $U$, in $M_{H_{3m}}$ via a diffeomorphism $\iota$.
Now, we require the metric on $H_{3m} \times [-1, -1+\varepsilon')$ to be the pull-back metric obtained from that of $U$ by $\iota$.
In the same way, we can construct a metric near the boundary $H_{3m}\times \{2\}$ of $H_{3m}\times [-1, 2]$, i.e.,\ $H_{3m}\times (2-\varepsilon'', 2]$.
Meanwhile, on $H_{3m}\times [0,1]$, we give the product metric {constructed} by $g^{TH_{3m}}$ and the standard metric on $[0,1]$. 
Finally, the metric on $H_{3m}\times [-1,2]$ is a smooth extension of the metrics on the above three pieces. 

Assume $f(Y_j)=p_j \in S^{2k-1}(1)$, $j= 1,\dots, l$. We choose a point $p_0\in S^{2k-1}(1)$ and for $j= 1,\dots,l$, pick a curve $\xi_j(\tau), 0\leq \tau\leq 1$, connecting $p_j$ and $p_0$. Following \cite{SWZ}, for $(y,\tau)\in H_{3m}\times [-1,2]$, $j=1,\dots,l$, we define\footnote{A similar trick also appears in \cite{CZ}.}
\begin{align}\label{r3.4}
f(y,\tau)=
\begin{cases}
p_j,\ (y,\tau)\in (Y_j\cap H_{3m})\times [-1,0],\\
\xi_j(\tau),\ (y,\tau)\in (Y_j\cap H_{3m})\times [0,1],\\
p_0,\ (y,\tau)\in (Y_j\cap H_{3m})\times [1,2].
\end{cases}
\end{align}
Note that some points of $\{p_j\}_{j=1}^l$ may coincide. Without loss of generality, we assume that  $(0,\cdots,0,\pm 1)\notin \xi_j(\tau), 1\leq j\leq l$.

Recall that $H_{3m}\times [-1,-1+\varepsilon')$ can be identified with a neighborhood of $U$ of $H_{3m}$ in $M_{H_{3m}}$. Under such an identification, the above $f(y,\tau)$ coincides with $f$ on $U$. Thus, $f$ can be extended to a map on $M_{H_{3m}}\cup (H_{3m}\times [-1,2])$ via $f(y,\tau)$. Denote such a map on $M_{H_{3m}}\cup (H_{3m}\times [-1,2])$ by $f_l$.

Let $M'_{H_{3m}}$ be another copy of $M_{H_{3m}}$ with the same metric and the opposite orientation.
Let $\iota'$ be the diffeomorphism, the isometry actually, from $H_{3m}\times (2-\varepsilon'', 2]$ to a neighborhood of $\partial M'_{H_{3m}}$, $U'$, in $M'_{H_{3m}}$.
On the disjoint union,
\begin{equation*}
  M^{\circ}_{H_{3m}} \sqcup H_{3m} \times (-1,2) \sqcup M'^{,\circ}_{H_{3m}},
\end{equation*}
we consider the equivalence relation $\sim$ given by $x_1 \sim x_2$ if and only if $x_1\in U^{\circ}$, $x_2\in H_{3m}\times (-1, -1+\varepsilon')$ (resp.\ $x_1\in U'^{,\circ}$, $x_2\in H_{3m} \times (2-\varepsilon'', 2)$) and $x_1= \iota(x_2)$ (resp.\ $x_1 = \iota'(x_2)$).
As a set, we define the gluing manifold $\widehat{M}_{H_{3m}}$ to be
\begin{equation*}
  \widehat{M}_{H_{3m}} = (M^{\circ}_{H_{3m}} \sqcup H_{3m}\times (-1,2) \sqcup M'^{,\circ}_{H_{3m}})/\sim,
\end{equation*}
endowed with the differentiable structure associated with the open cover $\{M^{\circ}_{H_{3m}}, H_{3m}\allowbreak\times (-1,2), M'^{,\circ}_{H_{3m}}\}$.
Moreover, since $\iota$ and $\iota'$ are isometries with respect to the metrics on $\{M^{\circ}_{H_{3m}}, H_{3m}\times (-1,2), M'^{,\circ}_{H_{3m}}\}$, $\widehat{M}_{H_{3m}}$ also inherits a metric from this open cover.
From now on, we view $M_{H_{3m}}$, $M'_{H_{3m}}$ and $H_{3m}\times [-1,2]$ as submanifolds of $\widehat{M}_{H_{3m}}$. The map $f_l$ can be extended to $\widehat{M}_{H_{3m}}$ by setting $f_l(M'_{H_{3m}})={p_0}$.
We still denote the map on $\widehat{M}_{H_{3m}}$ by $f_l$. The map $f_l$ has the following properties: 
\begin{align}\label{6/3}{\rm Supp}({\rm d}f_l)\subseteq {\rm Supp}({\rm d}f)\cup (H_{3m}\times [0,1]),\ {\rm deg}(f_l)={\rm deg}(f)\neq 0.
\end{align}

For any $\beta>0$, let $g^{TM}_{\beta}$ be the Riemannian metric on $M$ defined by
\begin{align}\label{1}
g^{TM}_{\beta}=\beta^2g^{TM}.
\end{align}

Let $g^{TH_{3m}}$ be the induced metric on $H_{3m}$ by (\ref{1}) with $\beta=1$ and ${\rm d}t^2$ be the standard metric on $[0,1]$. By the construction of $\widehat{M}_{H_{3m}}$, we can define a smooth metric $g_{\beta}^{T\widehat{M}_{H_{3m}}}$ on $\widehat{M}_{H_{3m}}$ in the following way:
\begin{align}\label{r3.6}
  \left . g_{\beta}^{T\widehat{M}_{H_{3m}}}\right|_{M_{H_{3m}}} = g^{TM}_{\beta}, \ 
   \left . g_{\beta}^{T\widehat{M}_{H_{3m}}}\right|_{M'_{H_{3m}}} = g^{TM_{H_{3m}}'}, \ 
 \left .   g_{\beta}^{T\widehat{M}_{H_{3m}}}\right|_{H_{3m}\times [0,1]} = g^{TH_{3m}}\oplus {{\rm d} t^2},
   \end{align}
and then paste these metrics together. Let $\nabla_\beta^{T\widehat{M}_{H_{3m}}}$ be the Levi-Civita connection on $T\widehat{M}_{H_{3m}}$ associated to the metric $g_{\beta}^{T\widehat{M}_{H_{3m}}}$.

{Figure~\ref{fig:tp} helps to explain this gluing procedure.}

\begin{figure}[ht]
  \centering
  \includegraphics[scale=0.9]{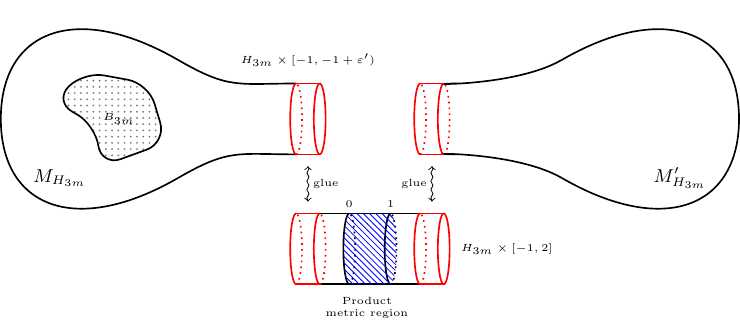}
  \caption{Gluing three parts.}
  \label{fig:tp}
\end{figure}

\subsection{A family of deformed twisted Dirac operators}

Recall that the ${\mathbb Z}_2$-graded vector bundle $E=E_+\oplus E_-$ over $S^{2k-1}(1)$ has been discussed in   Section 2.

Let $\mE_{3m}=\mE_{3m,+}\oplus \mE_{3m,-}=f^*_l(E_+)\oplus f^*_l(E_-)$ be the pull-back bundle with the family of pull-back connections $\nabla^{\mE_{3m},u}=\nabla^{\mE_{3m,+},u}\oplus \nabla^{\mE_{3m,-},u}=f^*_l(\nabla^{E_+,u})\oplus f^*_l(\nabla^{E_-,u})$, $0\leq u\leq 1$, and the pull-back metric. Let $(S_{\beta}(T\widehat{M}_{H_{3m}}),\nabla^{S_{\beta}(T\widehat{M}_{H_{3m}})})$ be the spinor bundle associated to the metric $g_{\beta}^{T\widehat{M}_{H_{3m}}}$, where $\nabla^{S_{\beta}(T\widehat{M}_{H_{3m}})}$ is the Hermitian connection on $S_\beta(T\widehat{M}_{H_{3m}})$ induced by $\nabla_{\beta}^{T\widehat{M}_{H_{3m}}}$.

Let $\nabla^{S_{\beta}(T\widehat{M}_{H_{3m}})\otimes \mE_{3m,\pm},u}$, $0\leq u\leq 1$, be the family of connections
on $S_{\beta}(T\widehat{M}_{H_{3m}})\otimes \mE_{3m,\pm}$
 induced by $\nabla^{S_{\beta}(T\widehat{M}_{H_{3m}})}$ and $\nabla^{\mE_{3m,\pm},u}$. 
 Let $\nabla^{S_{\beta}(T\widehat{M}_{H_{3m}})\otimes \mE_{3m},u}=\nabla^{S_{\beta}(T\widehat{M}_{H_{3m}})\otimes \mE_{3m,+},u}\oplus \nabla^{S_{\beta}(T\widehat{M}_{H_{3m}})\otimes \mE_{3m,-},u}$ be the family of connections on 
 \begin{align}\label{k3.10}
S_{\beta}(T\widehat{M}_{H_{3m}})\otimes \mE_{3m}=S_{\beta}(T\widehat{M}_{H_{3m}})\otimes \mE_{3m,+}\oplus S_{\beta}(T\widehat{M}_{H_{3m}})\otimes \mE_{3m,-}.
\end{align}

 Let    $D^{\mE_{3m},u}_{\beta}$, $0\leq u\leq 1$, acting on $S_{\beta}(T\widehat{M}_{H_{3m}})\otimes \mE_{3m}$ be the  family of twisted Dirac operators defined by
\begin{align}\label{y3.6}
D^{\mE_{3m},u}_{\beta}=\sum_{i=1}^{2k-1}c_{\beta}(h_i)\nabla_{h_i}^{S_{\beta}(T\widehat{M}_{H_{3m}})\otimes \mE_{3m},u},
\end{align}
where $\{h_1,\cdots,h_{2k-1}\}$ is a local oriented orthonormal basis of $(T\widehat{M}_{H_{3m}},g_{\beta}^{T\widehat{M}_{H_{3m}}})$, and $c_\beta(\cdot)$ means that the Clifford action is with respect to the metric $g^{T\widehat{M}_{H_{3m}}}_\beta$. 
\footnote{As explained in \cite[(2.2)]{BC}, with respect to the splitting 
(\ref{k3.10}),
the twisted Dirac operator in (\ref{y3.6}) has the form
$$
D^{\mE_{3m},u}_{\beta}=\left(\begin{matrix}D^{\mE_{3m,+},u}_{\beta}&& 0\\0&&-D^{\mE_{3m,-},u}_{\beta}\end{matrix}\right),
$$
in terms of the  standard (ungraded) Dirac operators.}

Following \cite{Z20}, let $U_{1\over 2}\subset M$ be the subset defined by $U_{1\over 2}=\{x\in {\rm Supp}({\rm d}f): |{\rm d}f(x)|<{1\over 2}\}$. Let $V_{1\over 2}\subset M$ be the open subset defined by $V_{1\over 2}=\{x:|\wedge^2({\rm d}f(x))|>{1\over 2}\}$, where $\wedge^2({\rm d}f)$ is the induced action of ${\rm d} f$ on the exterior product $\wedge^2(TM)$. Clearly, $\overline{U_{1\over 2}}\cap \overline{V_{1\over 2}}=\emptyset$.

Let $\varphi:M_{H_{3m}}\to [0,1]$ be a smooth function such that $\varphi=1$ on $(M_{H_{3m}}\setminus{\rm Supp}({\rm d}f))\cup U_{1\over 2}$, while $\varphi=0$ on $V_{1\over 2}$. Then we can extend $\varphi$ to $\widehat{M}_{H_{3m}}$ by setting $\varphi(\widehat{M}_{H_{3m}}\setminus M_{H_{3m}})=1$. We still denote the extended function on $\widehat{M}_{H_{3m}}$ by $\varphi$.

We choose a tangent vector field $v$ on $S^{2k-1}(1)\subset {\mathbb R}^{2k}$ such that $v$ is perpendicular to $\partial_{2k}$ and $v\neq 0$ on $\xi_j(\tau), 0\leq \tau\leq 1, 1\leq j\leq l$. Such a vector field clearly exists. 
\begin{prop}\label{p2.2}
For any $x\in S^{2k-1}(1)$, one has
$$\left[g(x),{\widetilde c} (v(x))\right]=0.$$
\end{prop}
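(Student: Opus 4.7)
The plan is a direct computation using the Clifford relations \eqref{c2.1}. First I would observe that $g(x)=\widetilde{c}(\partial_{2k})\widetilde{c}(x)$ is a product of two odd Clifford elements, so $g(x)\in\End_{+}(S_{2k})$ is even with respect to the $\mathbb{Z}_2$-grading, whereas $\widetilde{c}(v(x))\in\End_{-}(S_{2k})$ is odd. In Quillen's convention the supercommutator of an even and an odd element is the ordinary commutator, so the statement reduces to showing
\begin{equation*}
\widetilde{c}(\partial_{2k})\widetilde{c}(x)\widetilde{c}(v(x))=\widetilde{c}(v(x))\widetilde{c}(\partial_{2k})\widetilde{c}(x).
\end{equation*}

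Next I would push $\widetilde{c}(v(x))$ through the product on the right using $\widetilde{c}(u)\widetilde{c}(w)+\widetilde{c}(w)\widetilde{c}(u)=-2\langle u,w\rangle$. The defining conditions on $v$ make both relevant inner products vanish: by hypothesis $v(x)\perp\partial_{2k}$ in $\mathbb{R}^{2k}$, and because $v$ is a tangent vector field on the unit sphere we have $v(x)\in T_xS^{2k-1}(1)$, i.e., $v(x)\perp x$. Therefore $\widetilde{c}(v(x))$ anticommutes with both $\widetilde{c}(\partial_{2k})$ and $\widetilde{c}(x)$, and the two sign changes cancel, yielding the desired equality.

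There is no real obstacle here — the content of the proposition is exactly the pair of orthogonality conditions built into the choice of $v$. The only thing worth writing out carefully is the bookkeeping of the $\mathbb{Z}_2$-degrees to justify replacing the supercommutator by the ordinary commutator; the rest is a two-line Clifford algebra calculation.
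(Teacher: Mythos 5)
Your argument is correct and is essentially identical to the paper's proof: both use that $v(x)$ is orthogonal to $\partial_{2k}$ and to $x$ (the latter because $v$ is tangent to the sphere) so that $\widetilde{c}(v(x))$ anticommutes with each factor of $g(x)=\widetilde{c}(\partial_{2k})\widetilde{c}(x)$, and the two signs cancel. Your remark that the supercommutator reduces to the ordinary commutator because $g(x)$ is even is a correct piece of bookkeeping that the paper leaves implicit.
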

\begin{proof}
Since $v(x)$ is perpendicular to both $\partial_{2k}$ and $x$, then 
\begin{multline}
\left[g(x),{\widetilde c} (v(x))\right]=\left[{\widetilde c} (\partial_{2k}){\widetilde c} (x),{\widetilde c} (v(x))\right]={\widetilde c} (\partial_{2k}){\widetilde c} (x){\widetilde c} (v(x))-{\widetilde c} (v(x)){\widetilde c} (\partial_{2k}){\widetilde c} (x)\\
={\widetilde c} (\partial_{2k})({\widetilde c} (x){\widetilde c} (v(x))+{\widetilde c} (v(x)){\widetilde c} (x))=0.
\end{multline}
\end{proof}

Set 
$$V={\rm Id}_{S_{\beta}\left(T\widehat{M}_{H_{3m}}\right)}\otimes \sqrt{-1}f^*_{l}(\widetilde c(v)): \Gamma\left(S_{\beta}\left(T\widehat{M}_{H_{3m}}\right){\otimes}\mE_{3m}\right)\to \Gamma\left(S_{\beta}\left(T\widehat{M}_{H_{3m}}\right){\otimes}\mE_{3m}\right).$$

For any $\varepsilon>0$, let $D^{\mE_{3m},u}_{\beta,\varepsilon}:\Gamma(S_{\beta}(T\widehat{M}_{H_{3m}}){\otimes}\mE_{3m})\to \Gamma(S_{\beta}(T\widehat{M}_{H_{3m}}){\otimes}\mE_{3m})$, $0\leq u\leq 1$, be the family of deformed twisted Dirac operators defined by
\begin{align}\label{2.5}
D^{\mE_{3m},u}_{\beta,\varepsilon}=D^{\mE_{3m},u}_{\beta}+{{\varepsilon\varphi V}\over{\beta}}.
\end{align}

\begin{prop}\label{p3.1}
For $0\leq u\leq 1$, one has
\begin{align}\label{b3.8}
D^{\mE_{3m},u}_{\beta,\varepsilon}=(1-u)\left(D^{\mE_{3m},0}_{\beta}+{{\varepsilon\varphi V}\over{\beta}}\right)+u (f^*_l(g))^{-1}\left(D^{\mE_{3m},0}_{\beta}+{{\varepsilon\varphi V}\over{\beta}}\right)f^*_l(g).
\end{align}
In particular,
\begin{align}\label{b3.8a}
D^{\mE_{3m},1}_{\beta,\varepsilon}= (f^*_l(g))^{-1}\left(D^{\mE_{3m},0}_{\beta}+{{\varepsilon\varphi V}\over{\beta}}\right)f^*_l(g).
\end{align}
\end{prop}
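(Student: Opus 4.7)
The plan is to reduce the claimed identity to two elementary observations: the linearity of the connection family $\nabla^{E,u}=d+u\omega$ in $u$, and the commutation relation between $V$ and $f_l^*(g)$ provided by Proposition~\ref{p2.2}.

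First I would rewrite the Dirac operator in terms of $u$. Because $\nabla^{\mE_{3m},u}=d+u f_l^*(\omega)$ on $\mE_{3m}$, plugging into \eqref{y3.6} gives
\begin{equation*}
D^{\mE_{3m},u}_{\beta}=D^{\mE_{3m},0}_{\beta}+u\,c_\beta\bigl(f_l^*(\omega)\bigr),
\end{equation*}
so the $u$-dependence is linear and affects only the zeroth-order part. In particular,
\begin{equation*}
D^{\mE_{3m},u}_{\beta}=(1-u)D^{\mE_{3m},0}_{\beta}+u\,D^{\mE_{3m},1}_{\beta}.
\end{equation*}

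Next I would identify $D^{\mE_{3m},1}_{\beta}$ with the conjugate $(f_l^*g)^{-1}D^{\mE_{3m},0}_\beta f_l^*(g)$. Since $\omega=g^{-1}[d,g]=g^{-1}dg$, conjugating the trivial connection by $g$ gives $g^{-1}\circ d\circ g=d+\omega$, so after pull-back by $f_l$ and coupling with the spin connection,
\begin{equation*}
(f_l^*g)^{-1}\,D^{\mE_{3m},0}_{\beta}\,f_l^*(g)=D^{\mE_{3m},0}_{\beta}+c_\beta\bigl(f_l^*(\omega)\bigr)=D^{\mE_{3m},1}_{\beta}.
\end{equation*}
Combining this with the previous step establishes \eqref{b3.8} without the $V$-term.

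Finally, for the deformation term $\varepsilon\varphi V/\beta$, Proposition~\ref{p2.2} gives $[g(x),\widetilde c(v(x))]=0$ for every $x\in S^{2k-1}(1)$, and pulling back by $f_l$ and tensoring with $\mathrm{Id}_{S_\beta}$ yields $[f_l^*(g),V]=0$. Consequently,
\begin{equation*}
(1-u)\frac{\varepsilon\varphi V}{\beta}+u\,(f_l^*g)^{-1}\frac{\varepsilon\varphi V}{\beta}f_l^*(g)=\frac{\varepsilon\varphi V}{\beta},
\end{equation*}
which adds $\varepsilon\varphi V/\beta$ to both sides of the identity already proved. This yields \eqref{b3.8}, and setting $u=1$ gives \eqref{b3.8a}. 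The only substantive input is Proposition~\ref{p2.2}; the rest is bookkeeping, so the main thing to be careful about is making sure the conjugation of the Dirac operator by the bundle endomorphism $f_l^*(g)$ correctly reproduces the shift $c_\beta(f_l^*\omega)$ in the connection.
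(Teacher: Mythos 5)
Your proposal is correct and follows essentially the same route as the paper: the paper also splits the identity into the Dirac part (its equation \eqref{y4.3}, which it asserts ``by definition'' and you justify in more detail via linearity in $u$ and the conjugation identity $g^{-1}\circ {\rm d}\circ g={\rm d}+\omega$) and the $V$-part handled by Proposition \ref{p2.2}. No gaps.
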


\begin{proof}
By Proposition \ref{p2.2}, we have
\begin{align}\label{y4.2}
(1-u){{\varepsilon\varphi V}\over{\beta}}+u (f^*_l(g))^{-1}{{\varepsilon\varphi V}\over{\beta}}f^*_l(g)={{\varepsilon\varphi V}\over{\beta}}.
\end{align}
By definition, we have
\begin{align}\label{y4.3}
(1-u)D^{\mE_{3m},0}_{\beta}+u (f^*_l(g))^{-1}D^{\mE_{3m},0}_{\beta}f^*_l(g)=D^{\mE_{3m},u}_{\beta}.
\end{align}

From (\ref{2.5}), (\ref{y4.2}) and (\ref{y4.3}), we get (\ref{b3.8}).
\end{proof}

\section{The positivity of the deformed twisted Dirac operators}

We use the notations in Section 3. Let $k^{TM}$ be the scalar curvature of $g^{TM}$. Moreover, in this section, we also assume
\begin{align}\label{f5.1}
k^{TM}\geq (2k-1)(2k-2)\ {\rm on}\ {\rm Supp}({\rm d}f)
\end{align}
and
\begin{align}\label{e5.2}
\inf(k^{TM})\geq 0.
\end{align}

Under the assumptions (\ref{f5.1}) and (\ref{e5.2}), we will show that for suitable parameters $\varepsilon$, $m$ and $\beta$, the operator $D^{\mE_{3m},u}_{\beta,\varepsilon}$ is invertible for any $u\in [0,1]$.

From (\ref{2.5}), one has
\begin{multline}\label{a2.6}
\left(D^{\mE_{3m},u}_{\beta,\varepsilon}\right)^2
=\left(D^{\mE_{3m},u}_{\beta}\right)^2+{{\varepsilon}\over{\beta}}\left[D^{\mE_{3m},u}_{\beta},\varphi V\right]+{{\varepsilon^2\varphi^2 V^2}\over{\beta^2}}
\\
=\left(D^{\mE_{3m},u}_{\beta}\right)^2+{{\varepsilon c_\beta({\rm d}\varphi)V}\over{\beta}}+{{\varepsilon\varphi}\over{\beta}}\left[D^{\mE_{3m},u}_{\beta},V\right]+{{\varepsilon^2\varphi^2 V^2}\over{\beta^2}},
\end{multline}
where we identify ${\rm d}\varphi$ with the gradient of $\varphi$. 

By (\ref{2.00}) one has $[{c}_\beta(h_i), V]=0$ (cf. \cite[(2.6)]{BC}), $1\leq i\leq 2k-1$,  thus the operator $[D^{\mE_{3m},u}_{\beta},V]$ is an operator of order zero.

Following \cite[Theorem 1.17]{GL83}, let $\phi:[0, \infty) \rightarrow [0,1]$ be a smooth function such that $\phi \equiv 1$ on $[0, 1]$, $\phi \equiv 0$ on $[2, \infty)$ and $\phi' \approx -1$ on $[1, 2]$. We define a smooth function {${\psi_m}:M_{H_{3m}}\to [0,1]$ by
  \begin{equation}\label{6.6}
    \psi_m(x) = \phi\left(\frac{d(x)}{m}\right),
  \end{equation}
  where $m\in \mathbb{N}$. We extend $\psi_m$ to $(H_{3m}\times [-1,2])\cup M'_{H_{3m}}$ by setting
  \begin{equation*}
    \psi_m\big((H_{3m}\times [-1,2])\cup M'_{H_{3m}}\big)=0.
  \end{equation*}

Following~\cite[p. 115]{BL91}, let $\psi_{m,1},\, \psi_{m,2}: \widehat{M}_{H_{3m}}\rightarrow [0,1]$ be defined by
\begin{align}\label{0.20}
  \psi_{m,1} =\frac{\psi_{m}}{\big(\psi_{m}^2+(1-\psi_{m})^2\big)^{\frac{1}{2}}},\; \psi_{m,2} =\frac{1-\psi_{m}}{\big(\psi_{m}^2+(1-\psi_{m})^2\big)^{\frac{1}{2}}}.
\end{align}
Then $\psi^2_{m,1}+\psi^2_{m,2}=1$.

From (\ref{eq:fun-d}), (\ref{6.6}) and (\ref{0.20}), for $i=1,2$, we have
\begin{equation}
  \label{eq:est-psi}
  |\nabla \psi_{m,i}|(x)\leq {\frac{C}{m}} \text{ for any }x\in \widehat{M}_{H_{3m}},
\end{equation}
where $C$ is a constant independent of $m$.

For any $s\in \Gamma(S_\beta(T\widehat{M}_{H_{3m}}){\otimes}\mE_{3m})$, one has
\begin{align}
\left\|\left(D^{\mE_{3m},u}_{\beta}+{{\varepsilon\varphi V}\over{\beta}}\right)s\right\|_\beta^2
=\left\|\psi_{m,1}\left(D^{\mE_{3m},u}_{\beta}+{{\varepsilon\varphi V}\over{\beta}}\right)s\right\|_\beta^2+\left\|\psi_{m,2}\left(D^{\mE_{3m},u}_{\beta}+{{\varepsilon\varphi V}\over{\beta}}\right)s\right\|_\beta^2,
\end{align}
from which one gets,
\begin{multline}\label{0.22}
 \sqrt{2}\left\|\left(D^{\mE_{3m},u}_{\beta}+{{\varepsilon\varphi V}\over{\beta}}\right)s\right\|_\beta
 \geq \left\|\psi_{m,1} \left(D^{\mE_{3m},u}_{\beta}+{{\varepsilon\varphi V}\over{\beta}}\right)s\right\|_\beta+\left\|\psi_{m,2} \left(D^{\mE_{3m},u}_{\beta}+{{\varepsilon\varphi V}\over{\beta}}\right)s\right\|_\beta\\
 \geq \left\|\left(D^{\mE_{3m},u}_{\beta}+{{\varepsilon\varphi V}\over{\beta}}\right)\psi_{m,1} s\right\|_\beta+\left\| \left(D^{\mE_{3m},u}_{\beta}+{{\varepsilon\varphi V}\over{\beta}}\right)\psi_{m,2} s\right\|_\beta
 \\
 -\left\|c_\beta({\rm d}\psi_{m,1})s\right\|_\beta-\left\|c_\beta({\rm d}\psi_{m,2})s\right\|_\beta,
\end{multline}
where $\|\cdot\|_\beta$ means that the norm is defined with respect to  $g_\beta^{T\widehat{M}_{H_{3m}}}$ and we identify a one form with its gradient.

On $M_{H_{3m}}$, one has, via (\ref{a2.6}) and the Lichnerowicz formula \cite{L63}, 
\begin{multline}\label{a2.13}
\left(D^{\mE_{3m},u}_{\beta}+{{\varepsilon\varphi V}\over{\beta}}\right)^2
=-\Delta^{\mE_{3m},\beta,u}+{k^{TM}\over {4\beta^2}}+{1\over 2\beta^2}\sum_{i,\,j=1}^{2k-1}R^{\mE_{3m},u}(e_i,e_j)c_\beta(\beta^{-1}e_i)c_\beta(\beta^{-1}e_j)\\
+{{\varepsilon c_\beta({\rm d}\varphi)V}\over{\beta}}+{{\varepsilon\varphi}\over{\beta}}\left[D^{\mE_{3m},u}_{\beta},V\right]+{{\varepsilon^2\varphi^2V^2}\over{\beta^2}},
\end{multline}
where $-\Delta^{\mE_{3m},\beta,u}\geq 0$ is the corresponding Bochner Laplacian, $\{e_1,\cdots, e_{2k-1}\}$ is a local oriented orthonormal basis for $(TM,g^{TM})$ and $R^{\mE_{3m,u}}=(\nabla^{\mE_{3m,u}})^2$.

We recall the following result from \cite[(3.6)]{LSW}.
\begin{prop}\label{p5.1}
Let $\{\epsilon_i\}_{i=1}^{2k-1}$ be a local oriented orthonormal basis of $TS^{2k-1}(1)$ and $\{\epsilon^i\}_{i=1}^{2k-1}$ be the dual basis. Let $R^{f^*E,u}=\left(f^*\left(\nabla^{E,u}\right)\right)^2$ be the curvature  of $f^*(\nabla^{E,u})$. Then
$$R^{f^*E,u}=-u(1-u)\sum_{i,\, j=1}^{2k-1}f^*(\epsilon^i\wedge \epsilon^j)f^*\left(\widetilde c(\epsilon_i)\widetilde c(\epsilon_j)\right).$$
\end{prop}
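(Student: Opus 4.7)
\medskip

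The plan is to compute the curvature $(\nabla^{E,u})^2$ directly from $\nabla^{E,u}=\mathrm{d}+u\omega$ and then pull back by $f$. First I would exploit the fact that $\omega=g^{-1}[\mathrm{d},g]$ is a pure gauge one-form: from $\mathrm{d}(g^{-1}g)=0$ one gets $\mathrm{d}g^{-1}=-g^{-1}(\mathrm{d}g)g^{-1}$, hence the Maurer--Cartan identity
\begin{equation*}
\mathrm{d}\omega=\mathrm{d}g^{-1}\wedge \mathrm{d}g=-\omega\wedge\omega.
\end{equation*}
Expanding the curvature then gives
\begin{equation*}
\bigl(\nabla^{E,u}\bigr)^2=u\,\mathrm{d}\omega+u^{2}\,\omega\wedge\omega=(u^{2}-u)\,\omega\wedge\omega=-u(1-u)\,\omega\wedge\omega.
\end{equation*}
Thus the whole content of the proposition reduces to computing $\omega\wedge\omega$ on $S^{2k-1}(1)$ and pulling back by $f$.

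Next I would compute $\omega$ explicitly using $g(x)=\widetilde c(\partial_{2k})\widetilde c(x)$. Because $\widetilde c(\partial_{2k})^{2}=\widetilde c(x)^{2}=-1$, one has $g(x)^{-1}=\widetilde c(x)\widetilde c(\partial_{2k})$, and for $\epsilon_i\in T_xS^{2k-1}(1)$,
\begin{equation*}
\mathrm{d}g(\epsilon_i)=\widetilde c(\partial_{2k})\widetilde c(\epsilon_i),
\qquad
\omega(\epsilon_i)=\widetilde c(x)\widetilde c(\partial_{2k})\widetilde c(\partial_{2k})\widetilde c(\epsilon_i)=-\widetilde c(x)\widetilde c(\epsilon_i).
\end{equation*}
Evaluating $\omega\wedge\omega$ on $(\epsilon_i,\epsilon_j)$ and using the orthogonality $\langle \epsilon_i,x\rangle=0$, the Clifford relation gives $\widetilde c(\epsilon_i)\widetilde c(x)=-\widetilde c(x)\widetilde c(\epsilon_i)$, whence $\widetilde c(x)\widetilde c(\epsilon_i)\widetilde c(x)=\widetilde c(\epsilon_i)$. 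Therefore
\begin{equation*}
(\omega\wedge\omega)(\epsilon_i,\epsilon_j)=\widetilde c(\epsilon_i)\widetilde c(\epsilon_j)-\widetilde c(\epsilon_j)\widetilde c(\epsilon_i),
\end{equation*}
which after the usual antisymmetrization identifies
\begin{equation*}
\omega\wedge\omega=\sum_{i,j=1}^{2k-1}(\epsilon^i\wedge\epsilon^j)\otimes\widetilde c(\epsilon_i)\widetilde c(\epsilon_j).
\end{equation*}

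Finally, since $\nabla^{f^{*}E,u}=f^{*}\nabla^{E,u}$ and pullback commutes with the exterior derivative and with taking squares of connection forms, $R^{f^{*}E,u}=f^{*}R^{E,u}$; combining with the two previous displays yields the claimed formula. The only place that requires care is the sign bookkeeping: the Maurer--Cartan sign and the sign that the Clifford relation $\widetilde c(v)\widetilde c(w)+\widetilde c(w)\widetilde c(v)=-2\langle v,w\rangle$ (used in \eqref{c2.1}) produce; these conspire to give the coefficient $-u(1-u)$ rather than $+u(1-u)$. I expect no real obstacle beyond this conventional sign check and the small verification that tangent vectors to $S^{2k-1}(1)$ are orthogonal to the position vector $x$, which is what makes $\widetilde c(x)$ anticommute with $\widetilde c(\epsilon_i)$ and lets the computation collapse.
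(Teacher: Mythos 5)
Your argument is correct: the Maurer--Cartan identity gives $(\nabla^{E,u})^2=-u(1-u)\,\omega\wedge\omega$, and your Clifford computation of $\omega(\epsilon_i)=-\widetilde c(x)\widetilde c(\epsilon_i)$ together with the anticommutation $\widetilde c(x)\widetilde c(\epsilon_i)\widetilde c(x)=\widetilde c(\epsilon_i)$ correctly identifies $\omega\wedge\omega$ with $\sum_{i,j}\epsilon^i\wedge\epsilon^j\,\widetilde c(\epsilon_i)\widetilde c(\epsilon_j)$. The paper itself gives no proof here --- it recalls the statement from \cite[(3.6)]{LSW} --- and your computation is exactly the standard derivation carried out there, so there is nothing to add.
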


Let $|\wedge^{2}({\rm d} f)|_\beta$ be the norm of $\wedge^2({\rm d}f)$ with respect to the metric $g^{TM}_\beta$, then on $M_{H_{3m}}$, one has
\begin{align}\label{d5.11}
\left|\wedge^{2}({\rm d} f)\right|_\beta={1\over \beta^2}\left|\wedge^2({\rm d}f)\right| .
\end{align}

By  
Proposition \ref{p5.1}, (\ref{d5.11}) and proceeding as in \cite[(3.7)-(3.8)]{LSW}, one has on $M_{H_{3m}}$ that
\begin{multline}\label{a2.14}
{1\over 2\beta^2}\sum_{i,\,j=1}^{2k-1}R^{\mE_{3m},u}(e_i,e_j)c_\beta(\beta^{-1}e_i)c_\beta(\beta^{-1}e_j)
\geq -{(2k-1)(2k-2)}u(1-u)\left|\wedge^2({\rm d}f)\right|_\beta\\
= -{(2k-1)(2k-2)\over \beta^2}u(1-u)\left|\wedge^2({\rm d}f)\right|
 \geq -{(2k-1)(2k-2)\over 4\beta^2}\left|\wedge^2({\rm d}f)\right|.
\end{multline}

From (\ref{6.0}),  (\ref{f5.1}), (\ref{a2.13}) and (\ref{a2.14}), one has that near any $x\in \overline{V_{1\over 2}}$, 
\begin{align}\label{a2.15}
\left(D^{\mE_{3m},u}_{\beta,\varepsilon}\right)^2+\Delta^{\mE_{3m},\beta,u}\geq 0.
\end{align}

Near any $x\in {\rm Supp}({\rm d}f)\setminus \overline{V_{1\over 2}}$, from (\ref{f5.1}), (\ref{a2.13}) and (\ref{a2.14}), one has
\begin{align}\label{a2.16}
\left(D^{\mE_{3m},u}_{\beta,\varepsilon}\right)^2+\Delta^{\mE_{3m},\beta,u}\geq {{(2k-1)(2k-2)}\over{8\beta^2}}
+{{\varepsilon c_\beta({\rm d}\varphi)V}\over{\beta}}+{{\varepsilon\varphi}\over{\beta}}\left[D^{\mE_{3m},u}_{\beta},V\right]+{{\varepsilon^2\varphi^2 V^2}\over{\beta^2}}.
\end{align}

From (\ref{a2.13}), one has near any $x\in M_{H_{3m}}\setminus {\rm Supp}({\rm d}f)$, 
\begin{align}\label{a2.17}
\left(D^{\mE_{3m},u}_{\beta,\varepsilon}\right)^2+\Delta^{\mE_{3m},\beta,u}\geq {k^{TM}\over 4\beta^2}+{{\varepsilon^2 V^2}\over{\beta^2}}.
\end{align}

By definitions of $v$ and $f_l$, there exists $\delta>0$ such that on $\widehat{M}_{H_{3m}}\setminus K$, 
\begin{align}\label{e5.17}
V^2\geq \delta.
\end{align}

Since we have assumed that $\inf (k^{TM})\geq 0$, from (\ref{a2.15})-(\ref{e5.17}) and the compactness of ${\rm Supp}({\rm d}f)$, we see that when $\varepsilon>0$ is small enough (independent of $m\geq 1$ and $\beta\leq 1$), there is a smooth nonnegative endormorphism $a_\varepsilon$ of $(S_\beta(T\widehat{M}_{H_{3m}}){\otimes}\mE_{3m})|_{M_{H_{3m}}}$ such that
\begin{align}\label{y5.14}
a_\varepsilon>0\ \ {\rm on}\ \ (M_{H_{3m}}\setminus K)\cup U_{1\over 2}
\end{align}
and 
\begin{multline}\label{e5.16}
\left\langle \left(D^{\mE_{3m},u}_{\beta}+{{\varepsilon\varphi V}\over{\beta}}\right)^2\psi_{m,1}s,\psi_{m,1}s\right\rangle_{\beta}
\\
\geq \left\langle-\Delta^{\mE_{3m},\beta,u}(\psi_{m,1}s),\psi_{m,1}s\right\rangle_{\beta}+\left\langle a_\varepsilon(\psi_{m,1}s),\psi_{m,1}s\right\rangle_{\beta}.
\end{multline}

Recall that ${\rm Supp}({\rm d}f_l)\cap (\widehat{M}_{H_{3m}}\setminus K)\subseteq H_{3m}\times [0,1]$. By (\ref{r3.6}), (\ref{e5.2}), (\ref{a2.6}), (\ref{e5.17}) and the definition of $\varphi$, one has
\begin{multline}\label{e5.18}
\left\langle \left(D^{\mE_{3m},u}_{\beta}+{{\varepsilon\varphi V}\over{\beta}}\right)^2\psi_{m,2}s,\psi_{m,2}s\right\rangle_{\beta}
\geq \left\langle\left( {\varepsilon\over\beta}\left[D^{\mE_{3m},u}_{\beta},V\right]+{{\varepsilon^2 V^2}\over{\beta^2}}\right)\psi_{m,2}s,\psi_{m,2}s\right\rangle_{\beta}\\
=\left\langle {\varepsilon\over\beta}\left[D^{\mE_{3m},u}_{\beta},V\right]\psi_{m,2}s,\psi_{m,2}s \right\rangle_{\beta,H_{3m}\times [0,1]}+\left\langle {{\varepsilon^2 V^2}\over{\beta^2}}\psi_{m,2}s,\psi_{m,2}s\right\rangle_{\beta,\widehat{M}_{H_{3m}}\setminus K}\\
\geq {{\varepsilon^2\delta}\over{\beta^2}}\|\psi_{m,2}s\|_{\beta }+O_m\left({\varepsilon\over \beta}\right)\left\|\psi_{m,2}s\right\|_{\beta },
\end{multline}
where the subscript in $O_m(\cdot)$ means that the estimating constant may depend on $m$.

From the definitions of $\psi_{m,1}$, $\psi_{m,2}$ and 
(\ref{eq:est-psi}), one has
\begin{align}\label{e5.19}
\left\|c_\beta({\rm d}\psi_{m,1})s\right\|_\beta+\left\|c_\beta({\rm d}\psi_{m,2})s\right\|_\beta=O\left({1\over {\beta m}}\right)\|s\|_{\beta,\widehat{M}_{H_{3m}}\setminus K }.
\end{align}

By (\ref{2.5}), (\ref{0.22}), (\ref{y5.14})-(\ref{e5.19}) and taking $m\in \mathbb{N}$ sufficiently large and then taking $\beta>0$ small enough, one finds that there exist $\varepsilon>0$, $m\in \mathbb{N}$ and $\beta>0$ such that the operator $D^{\mE_{3m},u}_{\beta,\varepsilon}$ is invertible for any $u\in [0,1]$, under the condition (\ref{e5.2}).

\section{A proof of theorem \ref{th1.1}}

In this section we will give a proof of Theorem \ref{th1.1}. We will argue by contradiction.

Assume (\ref{e5.2}) holds, that is, 
\begin{align}\label{h5.1}
\inf(k^{TM})\geq 0.
\end{align}

Let $\eta(D^{\mE_{3m},u}_{\beta,\varepsilon})$ be the $\eta$-invariant of $D^{\mE_{3m},u}_{\beta,\varepsilon}$ in the sense of \cite{APS}. We choose  suitable $\varepsilon$, $m$ and $\beta$ as in Section 4 so that $D^{\mE_{3m},u}_{\beta,\varepsilon}$ is invertible for $0\leq u\leq 1$. Then $\eta(D^{\mE_{3m},u}_{\beta,\varepsilon})$ is a smooth function of $0\leq u\leq 1$. Clearly,  
\begin{align}\label{d4.6}
{\eta}\left(D^{\mE_{3m},u}_{\beta,\varepsilon}\right)-{\eta}\left(D^{\mE_{3m},0}_{\beta,\varepsilon}\right)
=\int_{0}^{u}{{\rm d}\over{{\rm d}s}}{\eta}\left(D^{\mE_{3m},s}_{\beta,\varepsilon}\right){\rm d}s.
\end{align}

By \cite{BF}, we have the asymptotic expansion
\begin{multline}
{\rm Tr}\left[\left({{\rm d}\over {{\rm d}}s}D^{\mE_{3m},s}_{\beta,\varepsilon}\right)\exp\left(-t\left(D^{\mE_{3m},s}_{\beta,\varepsilon}\right)^2\right)\right]\\
={{a_{-(2k-1)/2}(s)}\over {t^{(2k-1)/2}}}+\cdots +{{a_{-1/2}(s)}\over{t^{1/2}}}+O(t^{1/2},s),\ \ t\to 0^{+}
\end{multline}
and 
\begin{align}\label{d4.8}
{{\rm d}\over{{\rm d}s}}{\eta}\left(D^{\mE_{3m},s}_{\beta,\varepsilon}\right)={{-2a_{-1/2}(s)}\over{\sqrt{\pi}}}.
\end{align}

 Let 
$R_\beta^{T\widehat{M}_{H_{3m}}}=(\nabla_\beta^{T\widehat{M}_{H_{3m}}})^2$
be the curvature of the Levi-Civita connection $\nabla_\beta^{T\widehat{M}_{H_{3m}}}$. Recall that the $\widehat{A}$-form 
is defined by (cf. \cite[(1.19)]{Z})
$$\widehat{A}\left(T\widehat{M}_{H_{3m}},\nabla_\beta^{T\widehat{M}_{H_{3m}}}\right)={\rm det}^{1/2}\left({{R_\beta^{T\widehat{M}_{H_{3m}}}/{4\pi \sqrt{-1}}}\over{\sinh\left({R_\beta^{T\widehat{M}_{H_{3m}}}/{4\pi \sqrt{-1}}}\right)}}\right).$$

By Lemma \ref{le2.1}, (\ref{a2.6}) and  proceeding as  in \cite[Section 2(a)]{BC}, \cite[Theorem 3.2]{DZ} and \cite[p. 499-500]{Ge}, we have
\begin{multline}\label{d4.9}
{{a_{-1/2}(s)}\over{\sqrt{\pi}}}
=
\int_{\widehat{M}_{H_{3m}}}\widehat{A}\left(T\widehat{M}_{H_{3m}},\nabla_{\beta}^{T\widehat{M}_{H_{3m}}}\right){\rm tr_s}\left[{f^*_l(\omega)\over {2\pi \sqrt{-1}}}\exp\left({s(1-s)f^*_l(\omega)^2}\over{2\pi \sqrt{-1}}\right)\right].
\end{multline}

From (\ref{d4.9}),  we have, by proceeding as in \cite[pp. 349]{GL83}, 
\begin{align}\label{d4.10}
\int_{0}^{u}{{a_{-1/2}(s)}\over{\sqrt{\pi}}}{\rm d}s
=\int_{\widehat{M}_{H_{3m}}}{1\over {(k-1)!}}\left(1\over{2\pi \sqrt{-1}}\right)^{k}f^*_l\left({\rm tr_s}(\omega^{2k-1})\right)\int_{0}^{u}s^{k-1}(1-s)^{k-1} {\rm d}s.
\end{align}

By (\ref{d4.6}), (\ref{d4.8}) and (\ref{d4.10}), we get, 
\begin{align}\label{f5.11}
{\eta}\left(D^{\mE_{3m},1}_{\beta,\varepsilon}\right)-{\eta}\left(D^{\mE_{3m},0}_{\beta,\varepsilon}\right)
=-2\left({1\over{2\pi\sqrt{-1}}}\right)^k{{(k-1)!}\over{(2k-1)!}}{\rm deg}(f_l)\int_{S^{2k-1}(1)}{\rm tr_s}(\omega^{2k-1}).
\end{align}

From (\ref{d4.11}), (\ref{b3.8a}),  (\ref{f5.11}) and the fact that ${\rm deg}(f_l)={\rm deg}(f)$, we get
$$
  {\rm deg}(f)=0,
$$
which contradicts the assumption that ${\rm deg}(f)\neq 0$. 
Thus (\ref{h5.1}) does not hold. The proof of Theorem \ref{th1.1} is complete.

\begin{rem}
One may also use the $\eta$-invariant on the underlying noncompact manifold directly to deal with the problem considered in this paper. 
\end{rem}

$\ $

\noindent{\bf Acknowledgments.} Y. Li was partially supported by Fundamental Research Funds for the Central University Grant No. 63241429 and Nankai Zhide Foundation. 
G. Su was partially supported by NSFC Grant No. 12271266, NSFC Grant No. 11931007 and Nankai Zhide Foundation. X. Wang was partially supported by NSFC Grant No. 12101361 and the Project of Young Scholars of Shandong University. W. Zhang was partially supported by NSFC Grant No. 11931007 and Nankai Zhide Foundation.


\begin{thebibliography}{99}



  


 \bibitem{APS} M. F. Atiyah, V. K. Patodi and I. M. Singer, Spectral asymmetry and Riemannian geometry I. {\it Math. Pro. Camb. Phil. Soc.} 77 (1975), 43-69.

 
 \bibitem{BC} J.-M. Bismut and J. Cheeger, {$\eta$-invariants and their adiabatic limits}. {\it J. Amer. Math. Soc.} 2 (1989), 33-70.
 
 
 
 \bibitem {BF} J.-M. Bismut and D. S. Freed, {The analysis of elliptic families II. Dirac operators, eta invariants, and the holonomy theorem}. {\it Commun. Math. Phys.} 107 (1986), 103-163.
 

\bibitem{BL91}
J.-M. Bismut and G.~Lebeau, {Complex immersions and {Q}uillen metrics}.
  {\it Inst. Hautes {\'E}tudes Sci. Publ. Math.} 74 (1991), ii+298
  pp.


 
\bibitem{CZ} S. Cecchini and R. Zeidler, Scalar and mean curvature comparison via the Dirac operator. {\it Preprint.} arXiv:2103.06833. To appear in {\it Geom. Topol.} 



\bibitem{DZ} X. Dai and W. Zhang, {An index theorem for Toeplitz operators on odd-dimensional manifolds with boundary.} {\it J. Funct. Anal.} 238 (2006), 1-26.
 
\bibitem{Ge} E. Getzler, {The odd Chern character in cyclic homology and spectral flow}. {\it Topology.} 32 (1993), 489-507.
 
 
 




\bibitem{G23} M. Gromov, {Four lectures on scalar curvature}. {\it Perspectives in Scalar Curvature}. Vol. 1, World Sci. Publ., Hackensack, NJ, 2023, pp. 1-514.
 



 

\bibitem{GL83} M. Gromov and H. B. Lawson,  { Positive scalar curvature and the Dirac operator on complete Riemannian manifolds}.  {\it Inst. Hautes {\'E}tudes Sci. Publ. Math.} 58 (1983), 295-408.

 
\bibitem{LaMi89}
H.~B. Lawson and M.-L. Michelsohn, \emph{Spin Geometry}.   Princeton Univ. Press, Princeton, NJ, 1989.

\bibitem{LSW} Y. Li, G. Su and X. Wang, {Spectral flow, Llarull's rigidity theorem in odd dimensions and its generalization}. {\it Sci. China Math.} 67 (2024), 1103-1114.

\bibitem{L63}
A. Lichnerowicz, {Spineurs harmoniques.} {\it C. R. Acad. Sci. Paris,
S\'erie A,} 257 (1963), 7-9.

\bibitem{Ll}M. Llarull, Sharp estimates and the Dirac operator. {\it Math. Ann.} 310 (1998), 55-71.



 
 \bibitem{Q} D. Quillen, {Superconnections and the Chern character}. {\it Topology.} 24 (1985), 89-95.

  


  \bibitem{SWZ} G. Su, X. Wang and W. Zhang, {Nonnegative scalar curvature and area decreasing maps on complete foliated manifolds}. {\it J. Reine Angew. Math.} 790 (2022), 85-113.


\bibitem{Z} W. Zhang, {\it Lectures on Chern-Weil theory and Witten deformations.} Nankai Tracts in Mathematics, vol. 4, World Scientific, Singapore, 2001.



\bibitem{Z19} W. Zhang, {Positive scalar curvature on foliations: the enlargeability}. In: {\it Geometric Analysis}, Birkh\"{a}user/Springer, Cham, 2020, Progr. Math., vol. 333, 537-544.


\bibitem{Z20} W. Zhang, {Nonnegative scalar curvature and area decreasing maps}. {\it SIGMA} 16 (2020), 033, 7 pages.



\bibitem{Z23} W. Zhang, {Deformed Dirac operators and scalar curvature.} {\it Perspectives in Scalar Curvature}. Vol. 2,  World Sci. Publ., Hackensack, NJ, 2023, pp. 201-214. 
\end{thebibliography}
\end{document}